\documentclass{mpi2015-cscpreprint}
\usepackage{amssymb}
\usepackage{amsthm,amsmath}
\usepackage{graphicx}
\usepackage{appendix}
\usepackage{multirow}
\usepackage{array}
\usepackage{amsmath,amssymb}
\usepackage{graphics,graphicx,epsfig,algorithm,algpseudocode}
\usepackage{subcaption}
\usepackage{booktabs}

\usepackage{acronym}
\usepackage{bookmark}
\usepackage{adjustbox}
\usepackage{csquotes}
\usepackage{dsfont}
\usepackage[font=small,labelfont=bf]{caption}
\usepackage{todonotes}
\usepackage{mathtools}
\usepackage{tcolorbox}
\usepackage{hyperref}
\providecommand{\sc}{\scshape}

\newtheorem{theorem}{Theorem}[section]

\newtheorem{remark}{Remark}[section]

\numberwithin{equation}{section}

\newcommand{\norm}[1]{\left\lVert #1\right\rVert}

\begin{document}

\title{Boundary control and periodic trajectories of semilinear Euler equations with application to hydrogen transport}

\author[1]{Alexander Zuyev}
\author[1]{Peter Benner}
\affil[1]{Max Planck Institute for Dynamics of Complex Technical Systems, Magdeburg, Germany
    \email{zuyev@mpi-magdeburg.mpg.de}, \orcid{0000-0002-7610-5621}
    \email{benner@mpi-magdeburg.mpg.de}, \orcid{0000-0003-3362-4103}
}

\keywords{Euler equations, boundary control, lifting operator, Fourier expansion, periodic control}

\msc{93C20, 35Q93, 35L50}

\abstract{
A mathematical model of gas flow in a pipeline controlled by the inlet pressure and the outlet mass flow flux is considered in the form of the isothermal Euler equations with an appropriate equation of state. Within the framework of boundary control systems, this model is transformed into a nonlinear abstract differential equation using an appropriate lifting operator. For this abstract equation, a representation in terms of Fourier coefficients is derived analytically. Conditions for the existence of periodic solutions to a broad class of nonlinear control systems with continuously differentiable controls are established in abstract spaces. This framework is applied to a realistic hydrogen transport model to evaluate periodic operating regimes under periodic fluctuations in supply and demand.
}

 \novelty{
\begin{itemize}
\item A transformation of a nonlinear system of isothermal Euler equations with boundary control into an abstract control system with a lifting operator is proposed.
\item The existence and uniqueness of periodic solutions, as well as an iterative scheme for their construction, are established for a general class of nonlinear control systems with periodic controls in a Hilbert space.
\item Integral equations for the Fourier coefficients in the expansion of the solution to the considered gas transportation model are explicitly derived.
\item Numerical simulations are performed for a realistic model of hydrogen transport to illustrate the applicability of the proposed method for constructing periodic solutions.
\end{itemize}}

\maketitle

\section{Introduction}
Control problems for gas network models have been extensively studied in the literature (see, e.g.,~\cite{osiadacz1996different,herty2010new,domschke2021gas,bermudez2022numerical}).
A wide range of publications in this area focuses on the control-theoretic treatment of the Euler equations on graphs representing mathematical models of gas transportation networks~(cf. \cite{fazeny2026optimal} and the references therein). Although the characterization of fundamental properties, such as controllability~\cite{gugat2023constrained} and observability~\cite{gugat2024observer}, is crucial within the control-theoretic framework, the design of controls and corresponding trajectories that satisfy prescribed boundary conditions remains a challenging problem, both from a theoretical perspective and in terms of efficient numerical implementation.
%
In particular, in realistic scenarios the objective is to implement time-periodic supplies in gas pipelines in order to account for periodic consumer demand. Such periodicity naturally arises in applications involving daily or weekly supply profiles, as well as in long-term planning based on seasonal variations. These situations can be naturally addressed within the framework of periodic control.
A review of the existing literature indicates that the existence of periodic solutions to nonlinear gas network models has not been addressed.
From a control-theoretic perspective, the construction of periodic trajectories for infinite-dimensional nonlinear control systems under discontinuous or low-regularity control inputs remains a challenging problem and requires further theoretical development.

In this work, we focus on an isothermal gas transportation model based on the Euler equations with boundary control. The resulting control system is formulated as an abstract nonlinear differential equation in a suitable Hilbert space using the theory of boundary control systems.  To the best of our knowledge, this work provides the first complete characterization of existence and uniqueness of solutions for this class of systems under arbitrarily low-regular control inputs.

The presentation is organized as follows. In Section 2, we consider the system of isothermal Euler equations with boundary control inputs and transform it, using a reference steady state and a representation of the deviations in terms of Riemann invariants, into a boundary control system in a Hilbert space. This system is then analyzed within the framework of boundary control theory by exploiting lifting operators. In Section 3, we establish a general result describing the existence and uniqueness of periodic solutions to abstract control systems with periodic inputs.
This existence result is proved via the Banach fixed-point theorem.
In Section 4, an analytical procedure is developed to derive a representation of the system in terms of a Fourier expansion with respect to the eigenvectors of the infinitesimal generator.
From a computational perspective, these theoretical results are translated into a finite-dimensional Galerkin formulation. In the construction that follows, we employ eigenfunctions associated with the corresponding differential operator, which acts on a space of complex-valued functions with two-dimensional complex structure.
 Finally, in Section 5, the proposed procedure for computing periodic trajectories is illustrated through numerical simulations of the hydrogen transportation model under periodic modulations of the inlet supply pressure and the outlet mass flux.

\section{An isothermal gas flow model}
\subsection{Semilinear Euler equations with boundary control}
We assume that the flow of a gas in a pipe of length $\ell$ and diameter $D$, operating under isothermal conditions and at relatively small velocities,
is described by the following nonlinear hyperbolic system~\cite{herty2010new}:
\begin{eqnarray}\label{ISO2_1}
&\frac{\partial\rho}{\partial t} + \frac{\partial}{\partial x}(\rho v) &= 0,\\
&\frac{\partial}{\partial t}(\rho v) + \frac{\partial p}{\partial x} &= - \frac{\lambda}{2D} \rho v |v| - g\rho h_x,\; x\in (0,\ell).\label{ISO2_2}
\end{eqnarray}
Here, $\rho=\rho(t,x)$, $v=v(t,x)$, and $p=p(t,x)$ denote the density, velocity, and pressure of the gas at position $x\in [0,\ell]$ and time $t$, respectively. System~\eqref{ISO2_1}--\eqref{ISO2_2} also contains the following physical parameters: the pipe friction coefficient $\lambda\ge 0$,
the gravitational constant $g$, and $h_x= \sin\alpha$, where  $\alpha$ is the inclination angle of the gas pipe relative to the horizontal plane.
This system of partial differential equations, also known as the ISO2 model~\cite{domschke2021gas}, is obtained from the isothermal Euler equations by assuming that the velocity is significantly less than the speed of sound (cf.~\cite{osiadacz1996different}) and is complemented by the equation of state
\begin{equation}\label{state_eq}
p= c^2\rho, \quad c^2 = \frac{z_0 RT }{M_g}>0,
\end{equation}
where $R$ is the universal gas constant, $T$ is the absolute temperature, $M_g$ is the gas molecular weight, and $z_0>0$ is the compressibility factor, which is assumed to be constant. Note that $c>0$ is the constant speed of sound in this model, and the case of an ideal gas corresponds to $z_0=1$.

By defining the mass flux $q=\rho v$ and exploiting equation~\eqref{state_eq} with $p>0$, we rewrite system~\eqref{ISO2_1}--\eqref{ISO2_2} with respect to $(p,q)$ in the form
\begin{eqnarray}\label{pqsys_p}
&\frac{\partial p}{\partial t} + c^2 \frac{\partial q}{\partial x}  &= 0,\\
&\frac{\partial q}{\partial t} + \frac{\partial p}{\partial x} &= - \frac{\lambda c^2}{2D} q \left| \frac{q}{p}\right| - \frac{g h_x}{c^2} p,\; x\in (0,\ell),\label{pqsys_q}
\end{eqnarray}
and introduce the boundary controls
\begin{equation}\label{pq_controls}
p_{in}(t) = p(t,0),\; q_{out}(t) = q(t,\ell).
\end{equation}
Thus, system~\eqref{pqsys_p}--\eqref{pq_controls} is controlled by the inlet pressure $p_{in}(t)$ and the
outlet mass flow rate per unit cross-sectional area $q_{out}(t)$.

\subsection{Steady-state solution and equations in deviations}
For given constant controls $p_{in}(t)=\bar p_{in}>0$ and $q_{out}(t)=\bar q>0$, system~\eqref{pqsys_p}--\eqref{pq_controls} admits the steady-state solution $(\bar p,\bar q)$, where $\bar q(x) \equiv \bar q$ and $\bar p(x)$ satisfies the initial value problem
\begin{eqnarray}\label{p0_ODE}
&\bar p'(x) &= - \frac{a {\bar q}^2 }{\bar p(x)}  - b \bar p(x),\;x\in(0,\ell),\\
&\bar p(0) &= \bar p_{in},\label{p0_IC}
\end{eqnarray}
where
\begin{equation}\label{ab}
a= \frac{\lambda c^2 }{2D },\; b =  \frac{g h_x}{c^2}.
\end{equation}
Here and in the sequel, the prime denotes differentiation with respect to the spatial variable~$x$.

The solution of~\eqref{p0_ODE}--\eqref{p0_IC} is given by
\begin{equation}\label{pbar}
\bar p (x) = \begin{cases}\sqrt{\frac{(a {\bar q}^2 +b {\bar p_{in}}^2 )e^{-2bx}-a {\bar q}^2}{b}}\;\text{if}\;b\neq 0,\\
\sqrt{         {\bar p_{in}}^2-2a {\bar q}^2  x          } \;\text{if}\;b = 0.
\end{cases}
\end{equation}
By analysing the solution~\eqref{pbar}, we conclude that the inequality $\bar p(x)>0$ holds for all $x\in [0,\ell]$ if and only if the following condition is satisfied:
\begin{equation}\label{pq_conditon}
\frac{\bar p_{in}}{\bar q} > c \sqrt{\lambda \varkappa},
\end{equation}
where
$$
\varkappa =\begin{cases}
 \frac{\ell}{D}\;\text{if}\;h_x = 0,\\
 \frac{c^2 (e^{2bL}-1)}{2Dg h_x}\;\text{if}\;h_x > 0,\\
 -\frac{1}{2Db} \min \{1,e^{-2bL}-1\}\;\text{if}\; h_x < 0.
\end{cases}
$$

By substituting
\begin{equation}\label{pq_trans}
\begin{aligned}
& p(t,x)= \bar p(x) + \tilde p(t,x),\;
q(t,x)= \bar q(x) + \tilde q(t,x),\\
& p_{in}(t) = \bar p_{in} + c u_1(t),\;
q_{out}(t) = \bar q + u_2(t)
\end{aligned}
\end{equation}
into control system~\eqref{pqsys_p}--\eqref{pq_controls}, we obtain the following system with respect to the deviations $(\tilde p, \tilde q)$ from the considered steady state:
\begin{eqnarray}\label{pqsys_ptilde}
&\frac{\partial \tilde p}{\partial t} + c^2 \frac{\partial \tilde q}{\partial x}  &= 0,\\
&\frac{\partial \tilde q}{\partial t} + \frac{\partial \tilde p}{\partial x} &= f(\tilde p,\tilde q,x) ,\; x\in (0,\ell),\label{pqsys_qtilde} \\
&\tilde p(t,0) = c u_1(t), & \tilde q(t,\ell) = u_2(t),\label{pqsys_controls}
\end{eqnarray}
where $u_1$ and $u_2$ are treated as controls and
\begin{equation}\label{f_pq}
f(\tilde p,\tilde q,x) = a \left ( \frac{{\bar q}^2}{\bar p(x)} - \frac{ (\tilde q + \bar q) |\tilde q + \bar q| }{\tilde p + \bar p(x)} \right) - b \tilde p.
\end{equation}
For small $(\tilde p,\tilde q)$, the function $f$ admits the following Taylor expansion:
\begin{equation}\label{f_taylor}
f= \left(\frac{a {\bar q}^2}{{\bar p}^2(x)} - b\right) \tilde p - \frac{2 a {\bar q}}{{\bar p}(x)}\tilde q
-\frac{a{\bar q}^2}{{\bar p}^3(x)}{\tilde p}^2
+ \frac{2a{\bar q}}{{\bar p}^2(x)} \tilde p \tilde q
- \frac{a}{{\bar p}(x)}{\tilde q}^2 +
{\mathcal O}\left(|{\tilde p}|^3 + |{\tilde q}|^3 \right).
\end{equation}

\subsection{Boundary control system with Riemann invariants}
For further analysis, we express the state of the control system~\eqref{pqsys_ptilde}--\eqref{pqsys_controls} in terms of new state functions $r_1(t,x)$ and  $r_2(t,x)$ as
\begin{equation}\label{pq_riemann}
\tilde p = c(r_1-r_2),\; \tilde q = r_1 + r_2.
\end{equation}
These $r_1$ and $r_2$ are Riemann invariants for the homogeneous part of~\eqref{pqsys_ptilde}--\eqref{pqsys_qtilde} with $f\equiv 0$.
As a result, the control system~\eqref{pqsys_ptilde}--\eqref{pqsys_controls} takes the form
\begin{equation}\label{ISO2_Riemann}
\frac{\partial}{\partial t} \begin{pmatrix}r_1 \\ r_2 \end{pmatrix} + \begin{pmatrix}c & 0 \\ 0 & -c \end{pmatrix} \frac{\partial}{\partial x} \begin{pmatrix}r_1 \\ r_2 \end{pmatrix} =   \begin{pmatrix}1/2 \\ 1/2\end{pmatrix} f(c(r_1-r_2),r_1 + r_2,x),\; x\in (0,\ell),
\end{equation}
\begin{equation}\label{BC_Riemann}
\begin{aligned}
(r_1-r_2)|_{x=0} &= u_1 (t),\\ (r_1+r_2)|_{x=\ell} &= u_2(t).
\end{aligned}
\end{equation}
Here, the controls $u_1(t)$ and $u_2(t)$ correspond, respectively, to the gas pressure at the inlet
$x=0$ and the mass flux at the outlet $x=\ell$,
after shifting from their steady-state values and applying a suitable scaling.

\subsection{Abstract representation of the control system}
%
We represent the control system~\eqref{ISO2_Riemann}--\eqref{BC_Riemann} in abstract form in the Hilbert space $H = L^2([0,\ell];{\mathbb R}^2)$ as
\begin{eqnarray}\label{op_ISO2}
&\dot y(t) &= {\cal A} y(t) + {\cal F}(y(t)), \; y(t) = (y_1(t),y_2(t))^\top\in H,\\
&{\cal B}y(t) &= u(t), \; u(t) = (u_1(t),u_2(t))^\top \in U\subset {\mathbb R}^2,
\label{op_ISO2_BC}
\end{eqnarray}
where the linear operators ${\cal A}:D({\cal A})\to H$, ${\cal B}: D({\cal B})\to {\mathbb R}^2$, and the nonlinear operator ${\cal F}: D({\cal F})\to H$ are defined by
$$
D({\cal A}) = D({\cal B}) =  H^1([0,\ell];{\mathbb R}^2),\; {\cal A} \begin{pmatrix}y_1 \\ y_2 \end{pmatrix} = c \begin{pmatrix}-y_1' \\ y_2' \end{pmatrix},
$$
$$
{\cal B} \begin{pmatrix}y_1 \\ y_2 \end{pmatrix} = \begin{pmatrix}y_1(0)-y_2(0) \\ y_1(\ell)+y_2(\ell) \end{pmatrix},
$$
\begin{equation}\label{F_op}
{\cal F}\begin{pmatrix}y_1 \\ y_2\end{pmatrix}(x) = \begin{pmatrix}1/2 \\ 1/2\end{pmatrix} f(c(y_1(x)-y_2(x)),y_1(x)+y_2(x),x),
\end{equation}
and $D({\cal F})$ consists of all $y\in H$ for which the right-hand side of~\eqref{F_op} belongs to $H$.

Clearly, if $(r_1(t,x),r_2(t,x))$ is a classical solution of~\eqref{ISO2_Riemann}--\eqref{BC_Riemann} with a control $u(t)=(u_1(t),u_2(t))^\top$, defined on some interval $t\in [0,T]$,
then $y(t)=(r_1(t,\cdot),r_2(t,\cdot))^\top$ satisfies~\eqref{op_ISO2}--\eqref{op_ISO2_BC}.

To handle the inhomogeneous boundary conditions in~\eqref{op_ISO2_BC}, we follow the theory of boundary control systems (cf.~\cite[Sect.~3.3]{curtain2012introduction},~\cite[Chap.~10]{tucsnak2009observation}).
For this purpose, we introduce the lifting operator with a constant profile:
\begin{equation}\label{P_op}
{\cal P}: {\mathbb R}^2\to D({\cal B}),\quad  {\cal P} \begin{pmatrix}u_1 \\ u_2 \end{pmatrix} = \frac12 \begin{pmatrix}u_1 + u_2 \\ u_2 - u_1 \end{pmatrix}.
\end{equation}
For a differentiable control $u(t)$ on $t\in [0,T]$, we represent the state of~\eqref{op_ISO2} in the form
\begin{equation}\label{y_form}
y(t) = {\cal P} u(t) + \xi(t).
\end{equation}
Since ${\cal B} {\cal P} u(t) = u(t)$ for all $t\in [0,T]$, substituting~\eqref{y_form} into~\eqref{op_ISO2}--\eqref{op_ISO2_BC}
formally yields the following differential equation for $\xi(t)$ with homogeneous boundary conditions:
\begin{eqnarray}\label{y_hom}
&\dot \xi(t) &= {\cal A}_0 \xi(t) - {\cal P} \dot u(t)+ {\cal F}({\cal P}u(t)+\xi(t)), \; t\in [0,T],
\\
&{\cal B}\xi (t) &= 0. \label{y_hom_BC}
\end{eqnarray}
Here, it is crucial that ${\cal A P} u(t)\equiv 0$ due to~\eqref{P_op}. The operator ${\cal A}_0: D({\cal A}_0)\to H$ is the restriction of $\cal A$ to $\mathrm{ker}\, {\cal B}$:
\begin{equation}\label{A0}
D({\cal A}_0) = \{y\in H^1([0,\ell];{\mathbb R}^2)\,|\, y_1(0)-y_2(0)=y_1(\ell)+y_2(\ell)=0\},\; {\cal A}_0 y = {\cal A} y.
\end{equation}

The operator ${\cal A}_0:D({\cal A}_0)\to H$ introduced above generates a $C_0$-semigroup $\{e^{t {\cal A}_0}\}_{t\ge 0}$ of bounded linear operators on $H$ by the Lumer--Phillips theorem.

Thus, for a control $u\in C^1([0,T];U)$ and initial data $\xi^0\in H$, we define the mild solution of the Cauchy problem~\eqref{y_hom}--\eqref{y_hom_BC} with $\xi (0)=\xi^0$ by the following integral equation:
\begin{equation}\label{mild_ytilde}
\xi(t) = e^{t {\cal A}_0} \xi^0 - \int_0^t e^{(t-s) {\cal A}_0} [{\cal P}\dot u(s)-{\cal F}({\cal P}u(s)+\xi(s))]\, ds.
\end{equation}
Hence, the corresponding mild solution $y(t)$ of~\eqref{op_ISO2}--\eqref{op_ISO2_BC} with the initial condition $y(0)=y^0 = \xi^0 + {\cal P} u(0)$ is
\begin{equation}\label{mild_y}
y(t) = {\cal P}u(t) + e^{t {\cal A}_0} (y^0 - {\cal P}u(0)) -  \int_0^t e^{(t-s) {\cal A}_0} [{\cal P}\dot u(s)-{\cal F}(y(s))]\, ds.
\end{equation}

In the case ${\cal F}=0$, formula~\eqref{mild_y} defines the unique classical solution $y(t)$ of the Cauchy problem~\eqref{op_ISO2}--\eqref{op_ISO2_BC} with $y(0)=y^0$ for any control $u\in C^2([0,T];U)$ and any initial data $y^0\in H$ such that $y^0 - {\cal P}u(0) \in D({\cal A}_0)$~\cite[Theorem~3.3.4]{curtain2012introduction}.

Formal integration by parts in~\eqref{mild_y} yields
\begin{equation}\label{y_mild}
y(t) = e^{t {\cal A}_0} y^0  - \int_0^t {\cal A}_0 e^{(t-s) {\cal A}_0} {\cal P}u(s)\, ds + \int_0^t e^{(t-s) {\cal A}_0} {\cal F}(y(s))\, ds.
\end{equation}
Formula~\eqref{y_mild} allows us to define mild solutions of the considered boundary control system in an extended functional space.
Namely, for the Hilbert space $H_1=D({\cal A}_0)$ with the norm equivalent to the graph norm of ${\cal A}_0$, we consider its dual $H_{-1}$ with respect to the pivot Hilbert space $H$.
Then $H_1\subset H \subset H_{-1}$ densely with continuous embeddings.
The semigroup $\{e^{t {\cal A}_0}\}_{t\ge 0}$ is naturally extended to a $C_0$-semigroup on $H_{-1}$~\cite[Chap.~2]{tucsnak2009observation};
we use the same notation $e^{t {\cal A}_0}: H_{-1}\to H_{-1}$ for this extension, and ${\cal A}_0:H\to H_{-1}$ for the extended generator.
Thus, for any $y^0\in H_{-1}$ and any $u\in L^\infty ([0,T];U)$,
we will treat the corresponding function $y(t)\in H_{-1}$  satisfying~\eqref{y_mild} as the mild solution of the boundary control problem~\eqref{op_ISO2}--\eqref{op_ISO2_BC} with initial condition $y(0)=y^0$.

\section{Periodic solutions of an abstract control system}

Although formula~\eqref{y_mild} provides an elegant description of mild solutions, its applicability relies on extension of the nonlinear operator $\mathcal F$ to the interpolation space $H_{-1}$. This issue requires an additional study and, for the sake of characterizing periodic solutions with better regularity, we restrict our analysis to system of the form~\eqref{y_hom} in $H$ with continuously differentiable controls.
To address the issue of existence and construction of periodic solutions with sufficient generality, we consider the class of abstract differential equations described by
\begin{equation}\label{abs_eq}
\dot {\xi}(t) = {\cal A}_0 \xi (t) + \phi(\xi(t),u(t)) - {\cal P} \dot u(t),\quad \xi(t)\in H,\; u(t)\in U\subset {\mathbb R}^m,
\end{equation}
where $H$ is a Hilbert space, ${\cal A}_0:D({\cal A}_0)\to H$ is a linear operator that generates a $C_0$-semigroup $\{e^{t{\cal A}_0}\}_{t\ge 0}$ of bounded linear operators on $H$, ${\cal P}:{\mathbb R}^m\to H$ is a bounded linear operator, and the nonlinearity $\phi:H\times U \to H$ is continuous and satisfies the uniform Lipschitz condition with respect to $\xi$ on some closed set $H_D\subset H$, i.e. there exists a constant $L\ge 0$ such that
\begin{equation}\label{Lip_cond}
\|\phi(\xi_1,u) - \phi(\xi_2,u)\| \le L \|\xi_1 - \xi_2\|\quad \text{for all}\;\xi_1,\xi_2\in H_D,\; u\in U.
\end{equation}
Clearly, system~\eqref{y_hom} is a particular form of~\eqref{abs_eq} with $m=2$ and $\phi(\xi,u)={\mathcal F}({\mathcal P}u + \xi)$.
For a given control $u\in C^1([0,\tau];U)$, the mild solution of~\eqref{abs_eq} with an initial data $\xi(0)=\xi_0\in H$ is defined as
\begin{equation}\label{xi_mild}
\xi(t)=e^{t{\cal A}_0} \xi_0 + \int_0^t e^{(t-s){\cal A}_0} \left[ \phi(\xi(s),u(s)) - {\cal P} \dot u(s) \right]ds,\; t\in [0,\tau].
\end{equation}
To formulate the existence result, we introduce the Banach space $X=C([0,\tau];H)$ and define the nonlinear operator ${\mathcal G}:X\to X$ as follows:
\begin{equation}\label{G_op}
{\mathcal G}: \xi(\cdot) \in X \mapsto {\mathcal G}(\xi)(t): = e^{t{\cal A}_0} \gamma(\xi) + \int_0^t  e^{(t-s){\cal A}_0} \left[ \phi(\xi(s),u(s)) - {\cal P} \dot u(s) \right]ds,
\end{equation}
where the operator $\gamma:X\to H$ is defined by
\begin{equation}\label{gamma_op}
\gamma: \xi(\cdot) \in X \mapsto \gamma(\xi): = \left(I-e^{\tau {\mathcal A}_0}\right)^{-1}
\int_0^\tau  e^{(\tau-s){\cal A}_0} \left[ \phi(\xi(s),u(s)) - {\cal P} \dot u(s) \right]ds.
\end{equation}
The above construction assumes that $\tau>0$ is chosen such that the resolvent
\begin{equation}\label{R_t}
R_\tau := \left(I-e^{\tau {\mathcal A}_0}\right)^{-1}
\end{equation}
is well-defined as a bounded linear operator from $H$ to $H$, i.e. $1\notin \sigma(e^{\tau {\mathcal A}_0})$, where $\sigma(\cdot)$ denotes the spectrum.
Under additional technical assumptions on the nonlinearity $\phi$ and the growth constants $M\ge 1$, $\omega$ of the semigroup $\{e^{t{\mathcal A}_0}\}_{t\ge 0}$,
\begin{equation}\label{growthbound}
\|e^{t{\mathcal A}_0}\| \le M e^{\omega t}\quad \text{for all} \; t\ge 0,
\end{equation}
we present the theorem on the existence and uniqueness of $\tau$-periodic solutions of~\eqref{abs_eq} below.

\begin{theorem}\label{thm1}
Let $\tau>0$ be such that $1\notin \sigma(e^{\tau {\mathcal A}_0})$,
and let $u\in C^1([0,\tau];U)$ be a control such that $u(0)=u(\tau)$.
Assume that there is a closed set $H_D\subset H$
such that the Lipshitz condition~\eqref{Lip_cond} holds and $X_D=C([0,\tau];H_D)$ is invariant for $\mathcal G$.
If
\begin{eqnarray}
\alpha:=M^* L (1+\bar M \|R_\tau\|)<1,\label{contr_const}
\end{eqnarray}
where
\begin{equation}\label{M_constants}
\bar M := \mathrm{max}\{M,Me^{\omega \tau}\},\; M^*:= \begin{cases}M\tau, & \omega=0,\\ \frac{M(e^{\omega\tau}-1)}{\omega}, & \omega\neq 0, \end{cases}
\end{equation}
then there exists a unique mild solution $\xi^*(t)\in H_D$ of~\eqref{abs_eq} satisfying $\xi^*(0)=\xi^*(\tau)$.
For any $\xi^1\in X_D$, the sequence $\xi^{n+1}:={\mathcal G}(\xi^n)$, $n=1,2,\dots$, converges to $\xi^*$ in $X_D$ as $n\to \infty$.
\end{theorem}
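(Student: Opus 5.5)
We apply the Banach fixed-point theorem to the restriction of $\mathcal G$ to $X_D=C([0,\tau];H_D)$. Equipped with the sup-norm $\|\xi\|_X=\max_{t\in[0,\tau]}\|\xi(t)\|$, this set is a closed subset of the Banach space $X$, because $H_D$ is closed in $H$. It is therefore a complete metric space, and by hypothesis $\mathcal G(X_D)\subset X_D$.

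We first check that $\mathcal G$ maps $X$ into $X$. For $\xi\in X$ the map $s\mapsto \phi(\xi(s),u(s))-\mathcal P\dot u(s)$ is continuous into $H$, since $\phi$ is continuous, $u\in C^1$, and $\mathcal P$ is bounded. The convolution with a $C_0$-semigroup is then continuous in $t$, $\gamma(\xi)\in H$ because $R_\tau$ is bounded, and $t\mapsto e^{t\mathcal A_0}\gamma(\xi)$ is continuous by strong continuity.

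The main estimate is the contraction bound. Take $\xi,\eta\in X_D$. By \eqref{Lip_cond} and \eqref{growthbound}, the integral terms satisfy
\[
\Big\|\int_0^t e^{(t-s)\mathcal A_0}\big[\phi(\xi(s),u(s))-\phi(\eta(s),u(s))\big]ds\Big\|\le L\|\xi-\eta\|_X\int_0^\tau Me^{\omega r}dr = M^*L\|\xi-\eta\|_X
\]
for every $t\in[0,\tau]$. Here we used $\int_0^t Me^{\omega(t-s)}ds\le\int_0^\tau Me^{\omega r}dr$, which holds for either sign of $\omega$ because the integrand is positive. The same bound with $t=\tau$ gives $\|\gamma(\xi)-\gamma(\eta)\|\le \|R_\tau\|M^*L\|\xi-\eta\|_X$. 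Since $\sup_{t\in[0,\tau]}\|e^{t\mathcal A_0}\|\le \bar M$, the $\mathcal P\dot u$ terms cancel in the difference, and we obtain
\[
\|\mathcal G(\xi)-\mathcal G(\eta)\|_X\le M^*L(1+\bar M\|R_\tau\|)\|\xi-\eta\|_X=\alpha\|\xi-\eta\|_X .
\]
Because $\alpha<1$, Banach's theorem yields a unique fixed point $\xi^*\in X_D$, and the iterates $\xi^{n+1}=\mathcal G(\xi^n)$ converge to it from any $\xi^1\in X_D$.

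It remains to identify fixed points with periodic mild solutions. Write $I(\xi)=\int_0^\tau e^{(\tau-s)\mathcal A_0}[\phi(\xi(s),u(s))-\mathcal P\dot u(s)]ds$. A fixed point satisfies $\xi^*(0)=\gamma(\xi^*)$, so $\xi^*(t)=\mathcal G(\xi^*)(t)$ is exactly \eqref{xi_mild} with $\xi_0=\xi^*(0)$, i.e. $\xi^*$ is a mild solution. Moreover
\[
\xi^*(\tau)=e^{\tau\mathcal A_0}R_\tau I(\xi^*)+I(\xi^*)=\big(e^{\tau\mathcal A_0}+I-e^{\tau\mathcal A_0}\big)R_\tau I(\xi^*)=R_\tau I(\xi^*)=\xi^*(0).
\]
Conversely, let $\xi$ be a mild solution in $H_D$ with $\xi(0)=\xi(\tau)$. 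From \eqref{xi_mild} at $t=\tau$ we get $(I-e^{\tau\mathcal A_0})\xi(0)=I(\xi)$, hence $\xi(0)=\gamma(\xi)$ and $\xi=\mathcal G(\xi)$. Uniqueness of the fixed point then gives uniqueness of the periodic solution.

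The hypothesis $u(0)=u(\tau)$ guarantees that the $\tau$-periodic extension of $\xi^*$ solves the equation on all of $\mathbb R_+$ with a periodic control. This extension uses the semigroup property together with the cocycle identity for the convolution.

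The only genuinely delicate step is the constant bookkeeping: checking that $\int_0^t\|e^{(t-s)\mathcal A_0}\|ds\le M^*$ and $\|e^{t\mathcal A_0}\|\le\bar M$ hold for negative $\omega$ as well. Everything else is standard.
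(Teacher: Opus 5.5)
Your proof is correct and follows the paper's argument. You prove the same contraction estimate for $\mathcal G$ on the closed, hence complete, set $X_D$ with constant $\alpha=M^*L(1+\bar M\|R_\tau\|)$, and then apply the Banach fixed-point theorem. You also make explicit two steps the paper leaves implicit: that $\mathcal G$ maps $X$ into $X$, and that fixed points of $\mathcal G$ in $X_D$ coincide exactly with the $\tau$-periodic mild solutions taking values in $H_D$, which is what turns uniqueness of the fixed point into uniqueness of the periodic solution.
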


\begin{proof}
We check the contraction condition for the nonlinear operator $\mathcal G$. For any $\xi_1,\xi_2\in X_D$,
\begin{equation}\label{G_contr}
\begin{aligned}
\norm{{\mathcal G}(\xi_1)-{\mathcal G}(\xi_2)}_{X}
&= \sup_{t\in[0,\tau]}
\norm{
 e^{tA_0}\bigl(\gamma(\xi_1)-\gamma(\xi_2)\bigr)
 + \int_0^t e^{(t-s){\mathcal A}_0}
 \bigl(\phi(\xi_1(s),u(s))-\phi(\xi_2(s),u(s))\bigr)\,ds
}_H \\
&\le M\max\{1,e^{\omega\tau}\}
\norm{\gamma(\xi_1)-\gamma(\xi_2)}_H \\
&\quad + ML \sup_{t\in[0,\tau]}
\int_0^t e^{\omega(t-s)}
\norm{\xi_1(s)-\xi_2(s)}_H\,ds.
\end{aligned}
\end{equation}
Moreover,
\begin{equation}\label{gamma_contr}
\begin{aligned}
\norm{\gamma(\xi_1)-\gamma(\xi_2)}_H
&\le L \norm{R_\tau}
\int_0^\tau \left( \norm{e^{(\tau-s){\mathcal A}_0}}\cdot
\norm{\xi_1(s)-\xi_2(s)}_H\right)ds \\
&\le LM^*\norm{R_\tau}
\cdot
\norm{\xi_1 - \xi_2}_{X},
\end{aligned}
\end{equation}
where the norm without subscript stands for the operator norm in the space of bounded linear operators ${\mathcal L}(H,H)$.

By combining inequalities~\eqref{G_contr} and~\eqref{gamma_contr}, we conclude that the operator ${\mathcal G}:X_D\to X_D$ is contractive,
$$
\norm{{\mathcal G}(\xi_1)-{\mathcal G}(\xi_2)}_{X} \le \alpha \norm{\xi_1 - \xi_2}_{X}\quad \text{for all}\;\; \xi_1,\xi_2\in X_D.
$$
where the constant $\alpha<1$ is defined in~\eqref{contr_const}.

The assertion of Theorem~\ref{thm1} follows from the Banach fixed point theorem for the operator $\mathcal G$ defined by~\eqref{G_op} and~\eqref{gamma_op}.
\end{proof}

\begin{remark}\label{rem1}
Theorem~\ref{thm1} generalizes the simple iteration schemes, developed in~\cite{ZuyB24,BenCZ23} for finite-dimensional systems, to the case of nonlinear control systems in a Hilbert space.
\end{remark}

\section{Eigenfunction expansion of the solutions}

For further analysis, we derive a suitable coordinate representation of integral equations~\eqref{mild_ytilde}--\eqref{y_mild} by exploiting the eigenfunctions of ${\cal A}_0$
and treating the underlying Hilbert space $H$ as a complex Hilbert space.
With this objective, we consider the corresponding spectral problem:
$$
{\cal A}_0 y = \lambda y,\quad y=(y_1,y_2)^\top \in D({\cal A}_0),
$$
which is equivalent to:
$$
\begin{aligned}
y_1'(x) &= -\lambda c y_1(x),\\
y_2'(x)&= \lambda c y_2(x),\\
y_1(0)&=y_2(0),\\
y_1(\ell)&=-y_2(\ell).
\end{aligned}
$$
The eigenvalues of this spectral problem are $\lambda=\lambda_k$ with the corresponding normalized eigenfunctions $y=y^{(k)}$:
\begin{equation}\label{eigenvalues}
\lambda_k = \frac{i c  \pi (2k+1)}{2\ell},\; y^{(k)}(x) = \begin{pmatrix}y^{(k)}_1(x)  \\ y^{(k)}_2(x) \end{pmatrix}= \frac{1}{\sqrt{2\ell}}\begin{pmatrix} e^{-\lambda_k x/ c} \\ e^{\lambda_k x/ c} \end{pmatrix},\; k\in{\mathbb Z}.
\end{equation}
Since the operator ${\cal A}_0:D({\cal A}_0)\to H$ is skew-adjoint and  ${\cal A}_0^{-1}:H\to H$ is compact, the eigenfunctions $\{y^{(k)}\}_{k\in {\mathbb Z}}$ form an orthonormal basis of $H$.
This allows us to propose the following eigenvalue expansion result for the solutions of~\eqref{mild_ytilde}.

\begin{theorem}\label{thm2}
Let $\xi(t)\in H$ be a mild solution of~\eqref{mild_ytilde} with the initial condition $\xi(0)=\xi^0\in H$ and a control $u\in C^1([0,\tau];U)$. Then the Fourier coefficients $\{q_k(t)\}_{k\in \mathbb Z}$ in the expansion
\begin{equation}\label{xi_fourier}
\xi(t) = \sum_{k\in \mathbb Z} q_k(t) y^{(k)}
\end{equation}
satisfy (formally) the following infinite system of integral equations:
\begin{equation}\label{q_representation}
\begin{aligned}
q_k(t) &= e^{\lambda_k t} q_k^0  + \sum_{j=1}^2 p_{jk} \left(e^{\lambda_k t} u_j(0)-u_j(t)\right)
- \sum_{j=1}^2 p_{jk} \lambda_k \int_0^t e^{\lambda_k (t-s)}u_j(s)ds\\
&+\sum_{n\in\mathbb Z}  \psi^{(k)}_n  \int_0^t e^{\lambda_k(t-s)}\left( q_n(s)+ \sum_{j=1}^2p_{jn}u_j(s) \right) ds\\
&+ \sum_{n,m\in\mathbb Z} \psi^{(k)}_{nm}  \int_0^t e^{\lambda_k(t-s)} \left( q_n(s)+ \sum_{j=1}^2p_{jn}u_j(s) \right)
\left( q_m(s)+ \sum_{j=1}^2p_{jm}u_j(s) \right)
ds,\;\;k\in {\mathbb Z}.
\end{aligned}
\end{equation}
Here, $q_k^0 = \left<\xi(0),y^{(k)}\right>_H$ and
\begin{equation}\label{p_coeff}
p_{1k} = \frac{i\sqrt{2\ell}}{\pi (2k+1)},\; p_{2k} =\frac{(-1)^k\sqrt{2\ell}}{\pi (2k+1)},\; \beta_k= \frac{\pi(2k+1)}{2\ell},\quad k\in {\mathbb Z},
\end{equation}
 \begin{equation}\label{psi_k}
 \psi^{(k)}_n = \sqrt{\frac{2}{\ell}} \int_0^\ell \Psi^{(1)}_n(x) \cos(\beta_k x)dx,\;
  \psi^{(k)}_{nm} = \sqrt{\frac{2}{\ell}} \int_0^\ell \Psi^{(2)}_{nm}(x) \cos(\beta_k x)dx,
 \end{equation}
\begin{equation}\label{Psi_fun}
\begin{aligned}
\Psi^{(1)}_k(x) &=
\frac{ic}{\sqrt{2\ell}}\left(b-\frac{a {\bar q}^2}{{\bar p}^2(x)} \right) \sin\beta_kx - \frac{\sqrt{2}a {\bar q}}{\sqrt{\ell}{\bar p}(x)}\cos\beta_k x,\\
\Psi^{(2)}_{km}(x) &=
\frac{ac^2{\bar q}^2}{\ell{\bar p}^3(x)}\sin\beta_k x \sin\beta_m x
- \frac{2iac{\bar q}}{\ell{\bar p}^2(x)} \sin\beta_k x \cos\beta_m x
- \frac{a}{\ell{\bar p}(x)}\cos\beta_k x \cos\beta_m x.
\end{aligned}
\end{equation}
\end{theorem}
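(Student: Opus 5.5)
The plan is to take the $H$-inner product of the mild-solution formula \eqref{mild_ytilde} with each eigenfunction $y^{(k)}$. We will use that ${\cal A}_0$ is skew-adjoint, so ${\cal A}_0^*y^{(k)}=-\lambda_k y^{(k)}=\bar\lambda_k y^{(k)}$. This gives
\[
\left<e^{t{\cal A}_0}h,y^{(k)}\right>_H=\left<h,e^{t{\cal A}_0^*}y^{(k)}\right>_H=e^{\lambda_k t}\left<h,y^{(k)}\right>_H
\]
for every $h\in H$. Applying this termwise under the Bochner integral turns \eqref{mild_ytilde} into the scalar identity
\[
q_k(t)=e^{\lambda_k t}q_k^0-\int_0^t e^{\lambda_k(t-s)}\left<{\cal P}\dot u(s),y^{(k)}\right>ds+\int_0^t e^{\lambda_k(t-s)}\left<{\cal F}({\cal P}u(s)+\xi(s)),y^{(k)}\right>ds .
\]
The remaining work is to compute each inner product explicitly.

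\textbf{Lifting term.} First compute $\left<{\cal P}e_j,y^{(k)}\right>_H$ for the unit vectors $e_j$, and expand ${\cal P}e_j=\sum_n p_{jn}y^{(n)}$. Since ${\cal P}e_j$ is constant in $x$, this is an elementary integral of $e^{\pm\bar\lambda_k x/c}$ over $[0,\ell]$. The relations $e^{\lambda_k\ell/c}=i(-1)^k$ and $\lambda_k/c=i\beta_k$ should produce exactly the coefficients \eqref{p_coeff}. I must take care with the convention for the inner product, which I take to be conjugate-linear in its second argument.

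Having these coefficients, integrate $\int_0^t e^{\lambda_k(t-s)}\dot u_j(s)\,ds$ by parts. This gives
\[
u_j(t)-e^{\lambda_k t}u_j(0)+\lambda_k\int_0^t e^{\lambda_k(t-s)}u_j(s)\,ds,
\]
and with the minus sign in front it yields the second and third terms of \eqref{q_representation}.

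\textbf{Nonlinear term.} Replace $f$ by its quadratic Taylor expansion \eqref{f_taylor}; the word ``formally'' in the statement licenses discarding the ${\mathcal O}(|\tilde p|^3+|\tilde q|^3)$ remainder. Write $y={\cal P}u+\xi=\sum_n w_n y^{(n)}$ with $w_n=q_n+\sum_j p_{jn}u_j$. Then:
\begin{itemize}
\item $\tilde q=y_1+y_2=\sum_n w_n\sqrt{2/\ell}\,\cos\beta_n x$;
\item $\tilde p=c(y_1-y_2)=-ic\sum_n w_n\sqrt{2/\ell}\,\sin\beta_n x$.
\end{itemize}
Substituting these series gives linear terms $\sum_n w_n(\cdot)$ and bilinear terms $\sum_{n,m}w_nw_m(\cdot)$. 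Collecting coefficients should reproduce $\Psi^{(1)}_n$ and $\Psi^{(2)}_{nm}$ in \eqref{Psi_fun}.

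Since ${\cal F}$ has the form $(1/2,1/2)^\top f$, its inner product with $y^{(k)}$ is
\[
\frac12\int_0^\ell f\,\overline{\left(y^{(k)}_1+y^{(k)}_2\right)}\,dx=\frac{1}{\sqrt{2\ell}}\int_0^\ell f\cos(\beta_k x)\,dx,
\]
which gives the kernels $\psi^{(k)}_n$ and $\psi^{(k)}_{nm}$ of \eqref{psi_k}, up to a normalization factor that must be matched with the constants inside $\Psi$. Finally, interchange the sums with the $x$- and $s$-integrals. This step is only formal, since convergence of the double series is not controlled, and the statement says as much.

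The main obstacle will be the bookkeeping of constants and signs: the factors of $i$, the conjugations, the $\sqrt{2/\ell}$ and $1/\sqrt{2\ell}$ normalizations, and the $(-1)^k$. These have to be matched exactly against \eqref{p_coeff}–\eqref{Psi_fun}. I would verify them by checking the single mode $n=m=k$ against a direct computation.
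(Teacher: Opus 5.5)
Your proposal is correct and follows essentially the same route as the paper. Both arguments project \eqref{mild_ytilde} onto $y^{(k)}$ using the diagonal action $e^{\lambda_k t}$ of $e^{t\mathcal{A}_0}$, expand $\mathcal{P}e_j$ to obtain $p_{jk}$, integrate the $\dot u$-term by parts, and Fourier-project the quadratic Taylor approximation of $\mathcal{F}$. The normalization you flag closes exactly: the paper's $\Psi^{(1)}_n,\Psi^{(2)}_{nm}$ are the coefficients of $\mathcal{F}_1=f/2$, so $\frac{1}{\sqrt{2\ell}}\int_0^\ell f\cos(\beta_k x)\,dx=\sqrt{2/\ell}\int_0^\ell \mathcal{F}_1\cos(\beta_k x)\,dx$ gives \eqref{psi_k} with no extra factor.
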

\begin{proof}
For $\xi\in H$, we denote its expansion with respect to the eigenfunction of ${\cal A}_0$ as
$$
\xi = \sum_{k\in \mathbb Z} q_k y^{(k)},\; q_k = \left<\xi,y^{(k)}\right>_H,
$$
so the infinite vector $q=(q_k)_{k\in {\mathbb Z}}\in \ell^2$ is composed of the coordinated of $\xi$ in the basis $\{y^{(k)}\}_{k\in {\mathbb Z}}$.
In this basis, the operator $e^{t{\cal A}_0}$ is diagonal with entries $e^{\lambda_k t}$, $k\in\mathbb Z$.
We also express the action of $\cal P$ in~\eqref{P_op} as
$$
{\cal P} u = {\cal P}_1 u_1 + {\cal P}_2 u_2,\; {\cal P}_1 = \begin{pmatrix}1/2 \\ -1/2\end{pmatrix}=\sum_{k\in\mathbb Z} p_{1k} y^{(k)},\; {\cal P}_2 = \begin{pmatrix}1/2 \\ 1/2\end{pmatrix}=\sum_{k\in\mathbb Z} p_{2k} y^{(k)},
$$
where $p_{1k} = \left<{\cal P}_1,y^{(k)} \right>_H$ and $p_{2k} = \left<{\cal P}_2,y^{(k)}\right>_H$ are computed in~\eqref{p_coeff}.
Then,
\begin{equation}\label{F_shifted}
{\mathcal F}({\mathcal P} u + \xi) = {\mathcal F} \left(\sum_{k\in{\mathbb Z}} \tilde q_k y^{(k)}\right),\quad
\tilde q_k := q_k+ \sum_{j=1}^2p_{jk}u_j,\; k\in\mathbb Z.
\end{equation}
Using the definition of $\mathcal F$ in~\eqref{F_op} and exploiting the equalities
\begin{equation}\label{y_sincos}
\begin{aligned}
& y^{(k)}_1(x) + y^{(k)}_2(x) = \sqrt{\frac{2}{\ell}} \cos\beta_k x,\\
& y^{(k)}_1(x) - y^{(k)}_2(x) = -i\sqrt{\frac{2}{\ell}} \sin\beta_k x,
\end{aligned}
\end{equation}
for the components of $y^{(k)}$ in~\eqref{eigenvalues}, we rewrite formula~\eqref{F_shifted} in the form:
\begin{equation}\label{F_composition}
{\mathcal F}({\mathcal P} u + \xi)(x) = \begin{pmatrix}1/2 \\ 1/2\end{pmatrix} f\left( -ic \sqrt{\frac2{\ell}}\sum_{k\in{\mathbb Z}}\tilde q_k \sin \beta_k x,
\sqrt{\frac2{\ell}}\sum_{k\in\mathbb Z}\tilde q_k \cos \beta_k x, x\right).
\end{equation}
Here, the function $f$ is defined by~\eqref{f_pq}.
By replacing the function $f$ with its Taylor expansion~\eqref{f_taylor} and truncating higher-order terms, we obtain
the following quadratic approximation for the components of ${\mathcal F}=({\mathcal F}_1,{\mathcal F}_2)^\top$ in~\eqref{F_composition} for small $\tilde q_k$:
\begin{equation}\label{F_approx}
{\mathcal F}_1({\mathcal P} u + \xi)(x) = {\mathcal F}_2({\mathcal P} u + \xi)(x)=\sum_{k\in\mathbb Z} \Psi^{(1)}_k(x) \tilde q_k+ \sum_{k,m\in\mathbb Z} \Psi^{(2)}_{km}(x) \tilde q_k \tilde q_m,
\end{equation}
where the functions $\Psi^{(1)}_k(x)$ and $\Psi^{(2)}_{km}(x)$ are defined in~\eqref{Psi_fun}.

For $u(s)\in U$ and $\xi(s)\in H$, we compute the Fourier series expansion of ${\mathcal F}({\mathcal P} u(s) + \xi(s))$:
 \begin{equation}\label{F_Fourier}
 {\mathcal F}({\mathcal P} u(s) + \xi(s)) = \sum_{k\in \mathbb Z} {\hat f}_k(s) y^{(k)},
 \end{equation}
 where
 \begin{equation}\label{f_k_expansion}
 \begin{aligned}
 {\hat f}_k(s) &=  \left<{\mathcal F}({\mathcal P} u(s) + \xi(s)),y^{(k)}\right>_H =\sqrt{\frac{2}{\ell}}\int_0^\ell {\mathcal F}_1({\mathcal P} u(s) + \xi(s)) \cos(\beta_k x)dx \\
 & = \sum_{n\in\mathbb Z} \psi^{(k)}_n \tilde q_n(s) + \sum_{n,m\in\mathbb Z} \psi^{(k)}_{nm} \tilde q_n(s) \tilde q_m(s),
 \end{aligned}
 \end{equation}
 and the coefficients $\psi^{(k)}_n$, $\psi^{(k)}_{nm}$ are given by~\eqref{psi_k}.

 Finally, for $u\in C^1([0,\tau];U)$, the substitution of $\xi(t) = \sum_{k\in\mathbb Z}q_k(t)y^{(k)}$ into~\eqref{mild_ytilde} with taking into account~\eqref{F_Fourier} and~\eqref{f_k_expansion} results in the equations~\eqref{q_representation}.
\end{proof}

\begin{remark}\label{rem1}
If the vector of Fourier coefficients $(q_k(t))_{k\in\mathbb Z}\in\ell^2$ of $\xi(t)\in H$ satisfies system~\eqref{q_representation} for all $t\in [0,\tau]$, $k\in\mathbb Z$ then, because of~\eqref{y_form}, the corresponding solution of~\eqref{op_ISO2}--\eqref{op_ISO2_BC} is represented as
\begin{equation}\label{y_fourier}
y(t)={\cal P} u(t) + \sum_{k\in\mathbb Z}q_k(t)y^{(k)},\quad t\in [0,\tau].
\end{equation}
\end{remark}

\begin{remark}\label{rem2}
As the operators $e^{t{\mathcal A}_0}$ are diagonal in the basis $\{y^{(k)}\}$, with entries $e^{\lambda_k t}$ and $\lambda_k$ defined by~\eqref{eigenvalues}, then the resolvent condition~\eqref{R_t} for this operator ${\mathcal A}_0$ is equivalent to:
\begin{equation}\label{rescond_lambda}
\inf_{k\in\mathbb Z} \left|e^{\lambda_k \tau} -1\right| >0.
\end{equation}
This condition means that the sequence $\{e^{\lambda_k \tau} (\mathrm{mod}\,2\pi i)\}_{k\in\mathbb Z}$ is separated from zero; equivalently, the sequence $\{(2k+1)\tau^* (\mathrm{mod}\,1)\}_{k\in\mathbb Z}$ is separated from zero, where
$$
\tau^* = \frac{c\tau}{4\ell}.
$$
We conclude that~\eqref{rescond_lambda} holds for every $\tau>0$ such that the corresponding $\tau^* = n/m$ is a rational number with $n,m\in\mathbb N$, $(n,m)=1$, and $m$ is even.
\end{remark}

\section{Numerical simulations}

For a reference steady-state pressure ${\bar p}_{in}>0$ and steady-state outlet mass flow rate per
unit cross-sectional area $\bar q>0$, the steady state pressure profile $\bar p(x)$ is defined by~\eqref{pbar}.
Then, for physical control inputs in the form of $p_{in}(t)$ and $q_{out}(t)$, we compute the controls $u_j(t)$ of~\eqref{op_ISO2}--\eqref{op_ISO2_BC} by~\eqref{pq_trans} as
$$
u_1(t) = \frac{p_{in}(t)-{\bar p}_{in}}{c},\; u_2(t) = q_{out}(t)-\bar q.
$$

Assuming that such controls are given as $C^1$-functions on a time interval $[0,\tau]$, we approximate the solutions $y(t)$ of the abstract differential equation~\eqref{y_form} by their truncated Fourier expansions $\hat y(t)$ from~\eqref{y_fourier} up to the coefficients $q_k(t)$ with $|k|\le K$ for some fixed $K\ge 0$:
\begin{equation}\label{y_fourier_truncated}
\hat y(t)={\cal P} u(t) + \sum_{|k|\le K}\hat q_k(t)y^{(k)}.
\end{equation}
Here, the Fourier coefficients $\hat q_k(t)$ are obtained by truncating all terms with $|k|,|m|,|n|>M$, in formula~\eqref{q_representation} from Theorem~\ref{thm2}:
\begin{equation}\label{q_truncated}
\begin{aligned}
\hat q_k(t) &= e^{\lambda_k t} {\hat q}_k^0  + \sum_{j=1}^2 p_{jk} \left(e^{\lambda_k t} u_j(0)-u_j(t)\right)
- \sum_{j=1}^2 p_{jk} \lambda_k \int_0^t e^{\lambda_k (t-s)}u_j(s)ds\\
&+\sum_{|n|\le K}  \psi^{(k)}_n  \int_0^t e^{\lambda_k(t-s)}\left( \hat q_n(s)+ \sum_{j=1}^2p_{jn}u_j(s) \right) ds\\
&+ \sum_{|n|,|m|\le K} \psi^{(k)}_{nm}  \int_0^t e^{\lambda_k(t-s)} \left( \hat q_n(s)+ \sum_{j=1}^2p_{jn}u_j(s) \right)
\left( \hat q_m(s)+ \sum_{j=1}^2p_{jm}u_j(s) \right)
ds,\\
{\hat q}_k^0 &= \left<y(0)-{\mathcal P}u(0),y^{(k)}\right>_H,\quad |k|\le K.
\end{aligned}
\end{equation}
The above system of nonlinear integral equations with respect to $(\hat q_k)_{|k|\le K}$ can be approximately solved by the Picard-type iterations,
starting with the constant initial approximation $\hat q_k(t)\equiv{\hat q}_k^0$ for all $k=-K,..., K$.
This scheme has been implemented numerically in \texttt{MATLAB R2026a}.
If $(\hat q_k(t))_{|k|\le K}$ is a solution of~\eqref{q_truncated}, then the approximation of the corresponding solution of~\eqref{pqsys_p}--\eqref{pqsys_q} by formulas~\eqref{pq_riemann} and~\eqref{y_sincos} has the following physical components:
\begin{equation}\label{pq_approximated}
\begin{aligned}
p(t,x) &= \bar p(x) - \frac{ic\sqrt{2}}{\sqrt{\ell}}\sum_{|k|\le K} \left(\hat q_k(t) + \sum_{j=1}^2 p_{jk} u_j(t)\right)\sin\beta_k x,\\
q(t,x) & = \bar q + \sqrt{\frac{2}{\ell}}\sum_{|k|\le K} \left(\hat q_k(t) + \sum_{j=1}^2 p_{jk} u_j(t)\right)\cos\beta_k x,\quad x\in [0,\ell].
\end{aligned}
\end{equation}

For the numerical simulations, we choose the following realistic values of parameters of the Euler equations~\eqref{pqsys_q}--\eqref{pqsys_p} and the equation of state~\eqref{state_eq} for the transport of hydrogen in a pipe:
\begin{equation}\label{params}
\ell = 5\cdot 10^4\,m,\; D= 0.5\,m,\; h_x = 0,\;z_0 = 1.05,\;\lambda=0.01,\; T=288.15\,K,
\end{equation}
$$
R = 8.314462618\,\frac{J}{mol\cdot K},\; M_g = 2.01588\cdot 10^{-3}\,\frac{kg}{mol},\;
c=\sqrt{\frac{z_0 RT }{M_g}}\approx 1117 \, \frac{m}{s},\; a = \frac{\lambda c^2 }{2D }\approx 12479 \;\, \frac{m}{s^2},
$$
$$
{\bar p}_{in} = 8\cdot 10^6\,Pa,\; {\bar q} = 64\,\frac{kg}{m^2 \cdot s}.
$$
The steady-state pressure profile is defined by~\eqref{pbar} as
$\bar p(x) = \sqrt{{\bar p_{in}}^2-2a {\bar q}^2  x}>0$ for all $x\in [0,\ell]$, and $\bar p(\ell) \approx 7.674\cdot 10^6 \, Pa$.

To show the applicability of the construction in Theorem~\ref{thm1}, we approximate the action of the operator $\mathcal G$,
defined as $\eta = {\mathcal G}(\xi)$ for $\xi\in X$ by formulas~\eqref{G_op}--\eqref{gamma_op}, in terms of truncated
expansions $\xi(t) \approx \sum_{|k|\le K} \hat q_k(t) y^{(k)}$ and $\eta(t) \approx \sum_{|k|\le K} \hat \eta_k(t) y^{(k)}$
as $\hat\eta = {\mathcal G}_K(\hat q)$.
Then the simple iteration method of finding the fixed points of ${\mathcal G}_K$ (i.e. the functions $\hat q = (\hat q_{-K},...,\hat q_{-K})^\top:[0,\tau]\to \mathbb C^{2K+1}$ such that ${\mathcal G}_K(\hat q)= \hat q$) is implemented numerically in $\texttt{MATLAB R2026a}$.
For all subsequent simulations, we choose $K=2$. In this case, the truncated version of condition~\eqref{rescond_lambda} holds for any time horizon $\tau$ proportional to $1\, s$: $e^{\lambda_k}\neq 1$ for all $k=-2\,...\,2$.

In the first simulation, we take $\tau= 1\, s$ and consider smooth $\tau$-periodic inputs $p_{in}(t)$ and $p_{out}(t)$ of two harmonic components
with maximal deviations from the steady-state values of 0.25\% in pressure and 1\% in flow rate, as shown in Fig.~\ref{fig:input1}(a).
The periodic pressure $p(t,x)$ and flow flux $q(t,x)$ profiles, expressed by formulas~\eqref{pq_approximated} using the vector of generalized coordinates $\hat q(t)=(\hat q_{-2}(t),\hat q_{-1}(t),\hat q_{0}(t),\hat q_{1}(t),\hat q_{2}(t),)^\top$,
are shown in Fig.~\ref{fig:input1}(a). In these computations, the relative residual in the periodic condition after 10 iterations of the proposed method is $\|\hat q(\tau)-\hat q(0)\|\approx 2.185\cdot 10^{-18}\cdot \|\hat q(0)\|$.

\begin{figure}[ht]
 \centering

    \begin{subfigure}{0.48\textwidth}
        \centering
        \includegraphics[width=\linewidth]{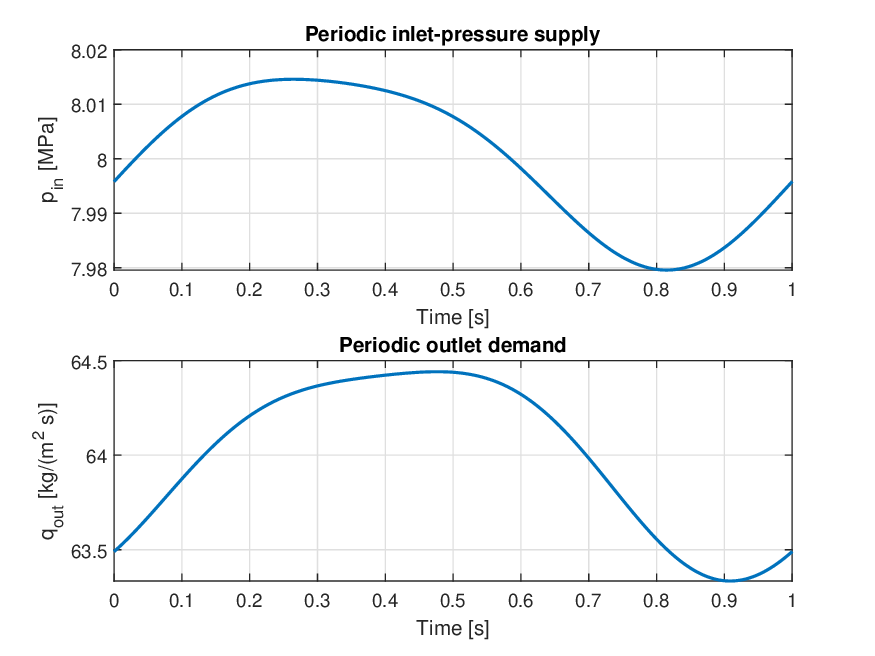}
        \caption{Pressure and flow rate controls}
        \label{fig:left}
    \end{subfigure}
    \hfill
    \begin{subfigure}{0.48\textwidth}
        \centering
        \includegraphics[width=\linewidth]{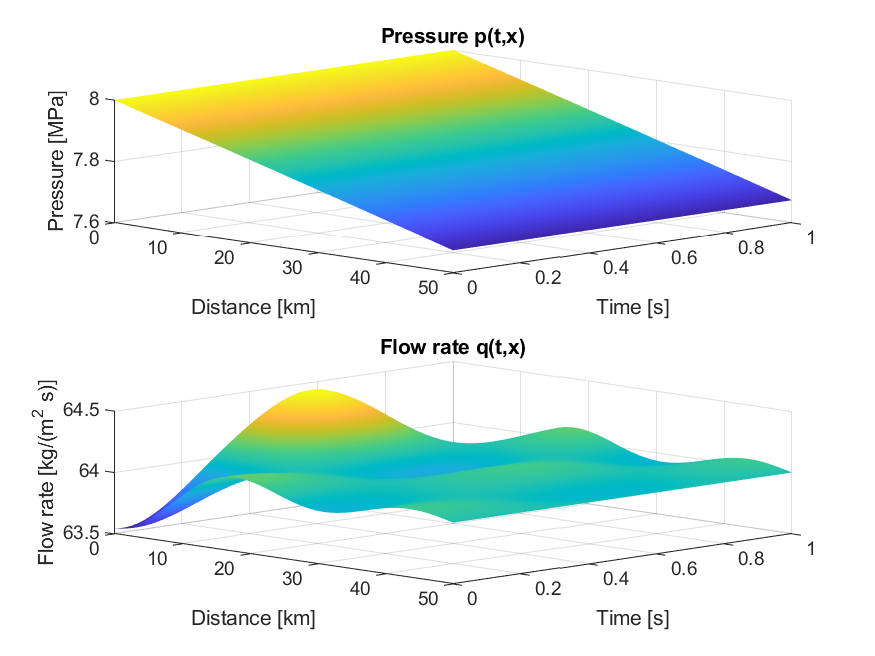}
        \caption{Profiles of $p(t,x)$ and $q(t,x)$}
        \label{fig:right}
    \end{subfigure}

    \caption{Periodic simulations for $\tau=1\,s$.}
    \label{fig:input1}
\end{figure}

To illustrate the behavior of the considered model on a larger time horizon of $\tau=24$ hours and with larger amplitudes of
7\% daily fluctuations in the inlet pressure and 25\% amplitude of fluctuations in the outlet flow rate, respectively, we present simulation results in Fig.~\ref{fig:profiles_24h}. After 41 iterations, the relative residual in the periodic boundary conditions is $\|\hat q(\tau)-\hat q(0)\|\approx 5.061\cdot 10^{-6}\cdot \|\hat q(0)\|$.

\begin{figure}[ht]
    \centering

    \begin{subfigure}{0.48\textwidth}
        \centering
        \includegraphics[width=\linewidth]{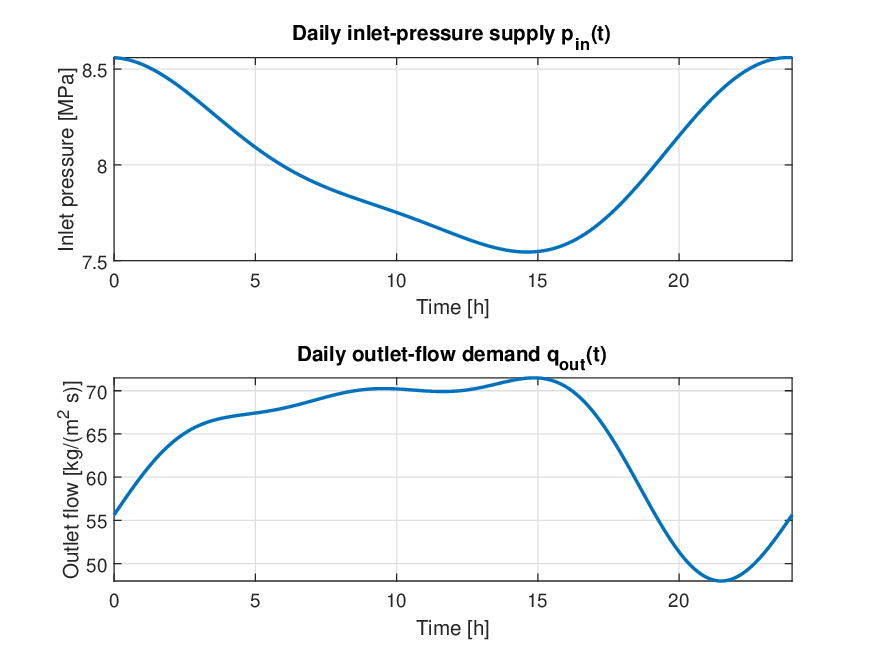}
        \caption{Pressure and flow rate controls}
        \label{fig:left}
    \end{subfigure}
    \hfill
    \begin{subfigure}{0.48\textwidth}
        \centering
        \includegraphics[width=\linewidth]{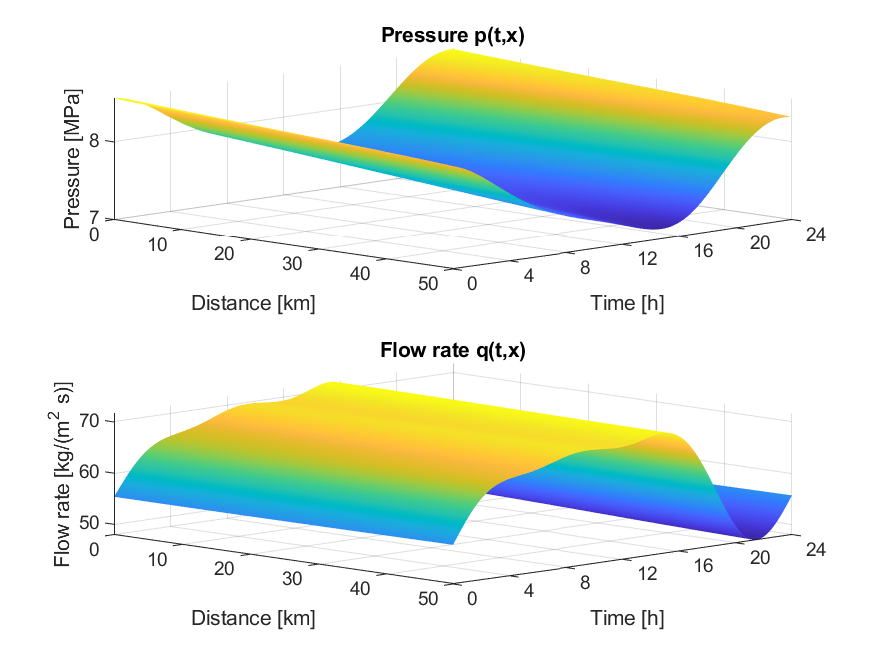}
        \caption{Profiles of $p(t,x)$ and $q(t,x)$}
        \label{fig:right}
    \end{subfigure}

    \caption{A $24$-hour periodic regime.}
    \label{fig:profiles_24h}
\end{figure}

\section{Conclusion}
Although the derivation of the approximate system for the Fourier coefficients is presented in Sections 4 and 5 for a quadratic approximation of the nonlinearity, the right-hand-side expansion in~\eqref{f_k_expansion} of Theorem~\ref{thm2} can potentially be extended to Taylor approximations of arbitrary order in~\eqref{f_taylor}. Indeed, the structure of the corresponding trigonometric series naturally admits such an extension, allowing higher-order terms to be incorporated in a systematic manner.

It should be emphasized that Theorem~\ref{thm1}, formulated in Section 3, establishes the existence and uniqueness of periodic solutions for abstract differential equations involving a general class of infinitesimal generators ${\mathcal A}_0$
 and a broad class of Lipschitz nonlinearities. As a direction for future research, we plan to extend this approach to gas network models on graphs and to investigate other classes of infinitesimal generators. For instance, when diffusion plays a crucial role in the transportation dynamics, the analysis of periodic solutions to nonlinear parabolic models remains a challenging problem.

 Additionally, the mathematical analysis of the transportation of gas mixtures, such as hydrogen and hydrocarbons, in the context of sustainable decarbonization strategies for gas networks represents another promising area of application for Theorem~\ref{thm1}.
 We plan to extend the considered $2\times 2$ hyperbolic model to models for gas mixtures~\cite{lozano2021speed,gugat2024observer} that allow the components to propagate at different speeds.
 In this context, the transformation of a boundary-controlled system into a semigroup representation with an infinitesimal generator defined under homogeneous boundary conditions constitutes the principal contribution of the present work and provides a foundation for future generalizations.
A considerable problem in this direction is the construction of lifting operators that take into account more general boundary conditions at the vertices of a gas transportation graph while admitting accurate numerical approximations.

\section*{Acknowledgments}%
The authors gratefully acknowledge the funding by the
European Regional Development Fund (ERDF) within the programme Research
and Innovation — Grant Number ZS/2023/12/182138.

\end{document}